\numberwithin{equation}{section}
\newtheorem{theorem*}{Theorem}
\newtheorem{lemma*}{Lemma}
\theoremstyle{plain}
\newtheorem{theorem}{Theorem}
\newtheorem{problem}{Problem}
\theoremstyle{definition}
\begin{document}

\title
[{Solutions to a Romanoff type problem}] {Solutions to a Romanoff type problem}

\author
[Y. Ding] {Yuchen Ding}

\address{(Yuchen Ding) School of Mathematical Science,  Yangzhou University, Yangzhou 225002, People's Republic of China}
\email{ycding@yzu.edu.cn}

\keywords{Romanoff theorem; Primes; Prime number theorem; Chebyshev estimate; Mertens estimate}
\subjclass[2010]{Primary 11P32, 11A41; Secondary 11B13.}

\begin{abstract}
We answer negatively a 2014 problem of Yang and Chen on Romanoff type representations. Sharp results involving their problem were also obtained in this article.
\end{abstract}
\maketitle

\baselineskip 18pt

\section{Introduction}
In 1849, de Polignac \cite{de1} made the conjecture that any odd number greater than 3 is the sum of a prime and a power of 2. But soon, he \cite{de2} recognized that 127 and 959 are two counterexamples. Actually, as de Polignac mentioned, these two  counterexamples were already pointed out in a 1752 letter from Euler to Goldbach. Along the positive line, Romanoff \cite{Ro} proved that there is a positive proportion of the odd numbers which can be represented by the sum of a prime and a power of 2. To answer a question of Romanoff, in 1950 Erd\H{o}s \cite{Er} constructed an arithmetic progression, none of whose member can be written as the sum of a prime and a power of 2. This immediately leads to a theorem of
van der Corput \cite{va}, which states that the odd numbers not with the form $p+2^m$, where $p\in\mathcal{P}$ and $m\in \mathbb{N}$, also possess positive lower density.
During the past decades, a large number of variants of the Romanoff theorem were established. To name only a few of them, see e.g. \cite{Ch1,Ch2,Ch3,ES,Els,Pa3} and the references listed therein.

For any set $\mathcal{A}$, let $\mathcal{A}(x)=\left|\mathcal{A}\cap[1,x]\right|$. A subset $\mathcal{B}$ of $\mathbb{N}$ is said to satisfy $c$-condition if $\mathcal{B}(cx)\gg \mathcal{B}(x)$ for some positive constant $c<1$. In the article of Yang and Chen \cite{Ch}, the following sumset
$$\mathcal{S}=\left\{p+b:p\in \mathcal{P},b\in \mathcal{B}\right\}$$
is considered, where $\mathcal{B}$ is a subset of $\mathbb{N}$ with $c$-condition. They proved that
\begin{equation}\label{e1}
\frac{x}{\log x}\min\left\{\mathcal{B}(x),\frac{\log x}{\log\log x}\right\}\ll \mathcal{S}(x)\ll \frac{x}{\log x}\min\left\{\mathcal{B}(x),\log x\right\}.
\end{equation}
As an application, Chen and Yang showed that
$$\#\left\{n\le x:n=p+2^{a^2}+2^{b^2}, p\in\mathcal{P},a,b\in\mathbb{N}\right\}\gg \frac{x}{\log\log x}.$$
Besides the above result, they also constructed a subset $\mathcal{B}$ of $\mathbb{N}$ such that
\begin{equation}\label{e2}
\mathcal{B}(x)=\frac{1+o(1)}{m+1}\left(\frac{\log x}{\log\log x}\right)^{m+1}
\end{equation}
and
$$\mathcal{S}(x)\ll \frac{x}{\log\log x},$$
where $m$ is an arbitrarily given integer. Yang and Chen \cite{Ch} then posed the following two problems for further research.

\begin{problem}\label{p1}
Does there exist a real number $\alpha>0$ and a subset $\mathcal{B}$ of $\mathbb{N}$ with $c$-condition such that $\mathcal{B}(x)\gg x^{\alpha}$ and $\mathcal{S}(x)\ll x/\log\log x$?
\end{problem}

\begin{problem}\label{p2}
Does there exist a positive integer $k$ such that the set of positive integers which can be represented as $p+\sum_{i=1}^{k}2^{m_i^2}$ with $p\in\mathcal{P}$ and $m_i\in \mathbb{N}$ has a positive lower density? If such $k$ exists, what is the minimal value of such $k$?
\end{problem}

Recently, the author \cite{Ding} gave a complete solution to {\bf Problem 2} by showing that $k=2$ is admissible. In this subsequent note, we turn to the investigations of {\bf Problem 1}. We give a negative answer to it via the following theorem.

\begin{theorem}\label{th1}
Let $\alpha>0$ be arbitrarily small given number and $\mathcal{B}$ a subset of $\mathbb{N}$ with $c$-condition such that $\mathcal{B}(x)\gg x^{\alpha}$. Then
$$
\mathcal{S}(x)=\#\left\{n\le x:n=p+b,p\in \mathcal{P},b\in \mathcal{B}\right\}\gg x/\log\log\log x.
$$
\end{theorem}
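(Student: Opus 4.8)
The plan is to run the second moment (Cauchy--Schwarz) method. For $n\le x$ write $r(n)=\#\{(p,b):p\in\mathcal{P},\ b\in\mathcal{B},\ p+b=n\}$, so that $\mathcal{S}(x)$ counts the $n\le x$ with $r(n)\ge1$ and
\[
\mathcal{S}(x)\ \ge\ \frac{\big(\sum_{n\le x}r(n)\big)^2}{\sum_{n\le x}r(n)^2}.
\]
For the first moment I would count pairs $(p,b)$ with $b\le x/2$ and $p\le x-b$: by Chebyshev's estimate the inner prime count is $\gg x/\log x$, and iterating the $c$-condition to get $\mathcal{B}(x/2)\gg\mathcal{B}(x)$ yields $\sum_{n\le x}r(n)\gg \tfrac{x}{\log x}\mathcal{B}(x)$.

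For the second moment, expanding the square gives $\sum_{n\le x}r(n)^2=\#\{(p_1,b_1,p_2,b_2):p_1+b_1=p_2+b_2\le x\}$. The diagonal $b_1=b_2$ reproduces the first moment and is $\ll\tfrac{x}{\log x}\mathcal{B}(x)$. For the off-diagonal terms I put $d=b_2-b_1\neq0$ and apply a Brun/Selberg sieve bound for prime pairs, namely $\#\{p\le x:p-d\in\mathcal{P}\}\ll \tfrac{x}{(\log x)^2}\cdot\tfrac{|d|}{\phi(|d|)}$ (the constant coming from Mertens' estimate), so that the off-diagonal contribution is
\[
\ll\ \frac{x}{(\log x)^2}\,E,\qquad E:=\sum_{\substack{b_1,b_2\in\mathcal{B}\cap[1,x]\\ b_1\neq b_2}}\frac{|b_1-b_2|}{\phi(|b_1-b_2|)}.
\]
Combining the two moments (the diagonal being negligible once $\mathcal{B}(x)\gg x^{\alpha}$), everything reduces to the energy bound $E\ll\mathcal{B}(x)^2\log\log\log x$, which then delivers $\mathcal{S}(x)\gg x/\log\log\log x$.

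The heart of the matter, and the step I expect to be the main obstacle, is this energy estimate. Writing $\tfrac{d}{\phi(d)}=\sum_{e\mid d}\tfrac{\mu^2(e)}{\phi(e)}$ and interchanging,
\[
E=\sum_{e}\frac{\mu^2(e)}{\phi(e)}\,N(e),\qquad N(e):=\#\{(b_1,b_2):b_1\neq b_2,\ e\mid b_1-b_2\},
\]
and I would bound $N(e)$ by the two elementary inequalities $N(e)\le\mathcal{B}(x)^2$ and $N(e)\le(2x/e)\mathcal{B}(x)$, the latter because each residue class modulo $e$ meets $[1,x]$ in at most $x/e+1$ points. Splitting the $e$-sum at $e_0=2x/\mathcal{B}(x)\ll x^{1-\alpha}$ and using $\sum_{e\le y}\mu^2(e)/\phi(e)\asymp\log y$ together with Mertens gives $E\ll\mathcal{B}(x)^2\log\log x$ at once, but this only recovers the weaker bound $\mathcal{S}(x)\gg x/\log\log x$. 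To gain the extra logarithm one must separate the primes dividing $e$: the contribution of primes $p\le\log\log x$ is at most $\prod_{p\le\log\log x}\tfrac{p}{p-1}\asymp\log\log\log x$, so the genuine difficulty is to show that the moduli $e$ built from primes in the range $(\log\log x,\ x^{1-\alpha}]$ contribute only $O(\mathcal{B}(x)^2)$. For such $e$ neither bound on $N(e)$ is individually strong enough, and this is exactly where $\mathcal{B}(x)\gg x^{\alpha}$ (hence, via the $c$-condition, $\mathcal{B}(y)\gg y^{\alpha}$ at every scale $y\le x$) must be used: since $\mathcal{B}$ has $\gg e^{\alpha}$ elements below $e$, these alone occupy $\gg e^{\alpha}$ distinct classes modulo $e$, so a fixed, polynomially dense set cannot be concentrated in few residue classes modulo a product of medium-sized primes. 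Quantifying this anti-concentration so as to replace the trivial bound $N(e)\le\mathcal{B}(x)^2$ by one saving a power of $e$ throughout the medium range is the crux, and I expect it to require a scale-by-scale decomposition of $\mathcal{B}\cap[1,x]$ organised by the $c$-condition.

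Conceptually, the density hypothesis is precisely what forces $\mathcal{S}(x)$ to be large, and it is instructive to see why a naive ``concentration'' strategy for making $\mathcal{S}$ small must fail: the small elements of $\mathcal{B}$ cannot be divisible by a large primorial, so when combined with large primes they fill in exactly the residue classes that any attempted concentration of $\mathcal{B}$ modulo a growing modulus would try to avoid. The surviving admissible modulus therefore cannot grow as fast as $x^{1-\alpha}$, and balancing this limitation against Mertens' estimate is what produces the triple logarithm in the theorem.
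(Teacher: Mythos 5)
Your reduction is correct up to the energy estimate, but the crux you isolate is not merely the hard step --- it is false under the stated hypotheses, so this route cannot be completed. You need the untruncated energy bound $E\ll\mathcal{B}(x)^{2}\log\log\log x$, equivalently that the moduli $e$ built from primes in $(\log\log x,\,x^{1-\alpha}]$ contribute only $O(\mathcal{B}(x)^{2})$. The construction in Theorem \ref{th2} of this paper refutes exactly this: take $\mathcal{B}=\bigcup_{j}\{n: d_{j}\mid n\}\cap[\exp(j^{m}),\exp((j+1)^{m}))$, where $d_{j}$ is the product of the first $j$ primes. This set satisfies the $c$-condition and $\mathcal{B}(x)\gg x^{1-o(1)}\gg x^{\alpha}$, yet for $x$ near the top of the $\ell$-th block (so $\ell\asymp(\log x)^{1/m}$) the bulk of $\mathcal{B}\cap[1,x]$ consists of multiples of $d_{\ell}$; hence $\gg\mathcal{B}(x)^{2}$ pairs have $d_{\ell}\mid b_{1}-b_{2}$, and for each such pair $|b_{1}-b_{2}|/\phi(|b_{1}-b_{2}|)\ge d_{\ell}/\phi(d_{\ell})\gg\log p_{\ell}\gg\log\log x$ by Mertens, forcing $E\gg\mathcal{B}(x)^{2}\log\log x$. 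Your anti-concentration heuristic fails for the same reason: the $\gg e^{\alpha}$ elements of $\mathcal{B}$ below $e$ do occupy many residue classes modulo $e$, but they are far too few to control $N(e)$, and nothing prevents the bulk of $\mathcal{B}\cap[1,x]$ (the large elements) from lying in a \emph{single} class modulo a primorial of size $x^{o(1)}$ --- which is precisely what the counterexample does, and why the sharpness example in Theorem \ref{th2} exists at all.

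The missing idea, which is the whole content of the paper's proof, is to truncate \emph{before} applying Cauchy--Schwarz: work with $f_{\alpha}(n)=\#\{(p,b): n=p+b,\ p\in\mathcal{P},\ b\in\mathcal{B},\ b<(\log n)^{1/\alpha}\}$, i.e.\ discard all representations with $b\ge(\log x)^{1/\alpha}$. Then every off-diagonal difference satisfies $0<|b_{1}-b_{2}|<(\log x)^{1/\alpha}$, so the \emph{pointwise} bound $\prod_{p\mid h}(1+1/p)\ll\log\log h\ll\log\log\log x$ finishes the second moment with no averaging over $h$ needed: $\sum_{n\le x}f_{\alpha}^{2}(n)\ll\frac{x\log\log\log x}{\log^{2}x}\mathcal{B}\bigl((\log x)^{1/\alpha}\bigr)^{2}$. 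The hypothesis $\mathcal{B}(x)\gg x^{\alpha}$ is used not for anti-concentration but only to guarantee $\mathcal{B}\bigl((\log x)^{1/\alpha}\bigr)\gg\log x$, which makes the diagonal term negligible; the $c$-condition enters the first moment, where restricting to $n>x^{\delta}$ with $\delta^{1/\alpha}>c$ gives $\mathcal{B}\bigl((\log x^{\delta})^{1/\alpha}\bigr)\gg\mathcal{B}\bigl((\log x)^{1/\alpha}\bigr)$ and hence $\sum_{n\le x}f_{\alpha}(n)\gg\frac{x}{\log x}\mathcal{B}\bigl((\log x)^{1/\alpha}\bigr)$. Cauchy--Schwarz applied to $f_{\alpha}$ then yields $\mathcal{S}(x)\ge\mathcal{S}_{\alpha}(x)\gg x/\log\log\log x$ directly. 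The lesson is that Cauchy--Schwarz is lossy enough that throwing away the large $b$'s costs nothing in the final bound, while it improves the energy by the factor $\log\log x/\log\log\log x$ that, as the counterexample shows, cannot be recovered any other way.
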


It would be of interest to show that the lower bound in Theorem \ref{th1} is in fact optimistic in some sense. Let's explain this by the following Theorem \ref{th2}.

From now on, let $\exp(y)$ denote $e^y$ for any real number $y$.

\begin{theorem}\label{th2} For any positive integer $m\ge2$, there exists a subset $\mathcal{B}$ of $\mathbb{N}$ with $c$-condition
such that
$$\mathcal{B}(x)\gg x\exp\left(-\frac{2}{m}(\log x)^{1/m}\log\log x\right)$$
and
$$\mathcal{S}(x)=\#\left\{p+b\le x:p\in \mathcal{P},b\in \mathcal{B}\right\}\ll x/\log\log\log x.$$
\end{theorem}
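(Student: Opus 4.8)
The plan is to build $\mathcal{B}$ from a single, slowly growing ``sieving level''. For $b\ge 3$ set $z(b)=\frac{2}{m}(\log b)^{1/m}\log\log b$, let $Q(b)=\prod_{q\le z(b)}q$ be the product of the primes up to $z(b)$, and define
$$\mathcal{B}=\{b\in\mathbb{N}:Q(b)\mid b\}.$$
Since $m\ge 2$ we have $z(b)=o(\log b)$, so by Chebyshev's estimate $\log Q(b)=\vartheta(z(b))\sim z(b)=o(\log b)$, whence $Q(b)\le b$ for all large $b$ and $\mathcal{B}$ is infinite. Counting multiples block by block on the ranges where $z(b)$ (equivalently $Q(b)$) is constant, the top block dominates and produces $\mathcal{B}(x)\gg x\exp(-z(x))=x\exp\!\big(-\tfrac{2}{m}(\log x)^{1/m}\log\log x\big)$. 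The same evaluation, together with $z(cx)-z(x)\to 0$ for fixed $c<1$, gives $\mathcal{B}(cx)\sim c\,\mathcal{B}(x)\gg\mathcal{B}(x)$, so $\mathcal{B}$ satisfies the $c$-condition. This settles the two requirements on the density of $\mathcal{B}$.

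The engine of the upper bound is an elementary divisibility observation. Suppose $b\in\mathcal{B}$ and $n-b=p$ is a prime with $p>z(b)$. For every prime $q\le z(b)$ we have $q\mid Q(b)\mid b$, hence $p=n-b\equiv n\pmod{q}$; as $p$ is a prime exceeding $z(b)$ it is coprime to $q$, forcing $q\nmid n$. Thus every $n$ represented through such a $b$ has least prime factor $P^-(n)>z(b)$. Because $z$ is nondecreasing, any representation with $b\ge B_0$ already forces $P^-(n)>z(B_0)$; this is the crucial point, since it lets me treat all large scales at once with a single roughness threshold rather than losing a factor through a union bound over scales.

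With a threshold $B_0$ to be chosen, I would split the representable $n\le x$ into those admitting a witness $b\ge B_0$ and those all of whose witnesses satisfy $b<B_0$. By the observation above (discarding the $O\big(z(x)\mathcal{B}(x)\big)$ negligible pairs with $p\le z(b)$) the first class lies in $\{n\le x:P^-(n)>z(B_0)\}$, whose cardinality is $\ll x\prod_{q\le z(B_0)}(1-1/q)\ll x/\log z(B_0)$ by Mertens' estimate. The second class contributes at most $\mathcal{B}(B_0)\pi(x)\ll\mathcal{B}(B_0)\,x/\log x$. Taking $B_0=\log x$ gives $z(B_0)=\frac{2}{m}(\log\log x)^{1/m}\log\log\log x$, so $\log z(B_0)\sim\frac{1}{m}\log\log\log x$ and the first bound is $\ll x/\log\log\log x$; meanwhile $\mathcal{B}(B_0)\ll(\log x)\exp(-z(B_0))$, so the second bound is $\ll x\exp(-z(B_0))=o(x/\log\log\log x)$. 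Adding the two yields $\mathcal{S}(x)\ll x/\log\log\log x$.

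The main obstacle is not any single estimate but tuning the construction so that the trivial bound for small $b$ and the sieve bound for large $b$ balance at the right scale: the level $z(b)$ must grow fast enough that $P^-(n)>z(B_0)$ is genuinely rare (so that $\log z(B_0)$ reaches the order $\log\log\log x$), yet slowly enough that $\mathcal{B}$ keeps the prescribed density and $Q(b)\mid b$ stays possible, which is exactly what pins the exponent $1/m$ and the constant $\frac{2}{m}$ and forces $m\ge 2$. The subordinate technical points I would need to treat carefully are the block-by-block evaluation of $\mathcal{B}(x)$ across the jumps of $Q(b)$ (showing that the final block carries a positive proportion of $[1,x]$), the verification that the discarded pairs with $p\le z(b)$ contribute $o(x/\log\log\log x)$, and the dependence of all implied constants on $m$.
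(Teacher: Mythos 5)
Your overall strategy coincides with the paper's: build $\mathcal{B}$ from integers divisible by a primorial that grows with the size of the element (the paper does this in blocks $[\exp(j^m),\exp((j+1)^m))$ with modulus $d_j=p_1\cdots p_j$; your condition $Q(b)\mid b$ is the same device in a different parametrization), observe that $n=p+b$ with $p$ above the sieving level has no small prime factor, count such $n$ by the Legendre sieve plus Mertens, and dispose of small $b$ (times $\pi(x)$) and small $p$ (times $\mathcal{B}(x)$) trivially. Your upper-bound half is correct: your rough-$n$ class, your discarded pairs $p\le z(b)$, and your class $b<B_0$ correspond exactly to the paper's Parts I, II, III, and with $B_0=\log x$ the arithmetic $\log z(B_0)\asymp\log\log\log x$ comes out right (you should keep the $O(2^{\pi(z(B_0))})$ term from the Legendre sieve, but it is negligible here).

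The genuine gap is in the density claim. Sieving by \emph{all} primes up to the level $z(b)=\frac{2}{m}(\log b)^{1/m}\log\log b$ costs a modulus $Q(b)=\exp\left(\vartheta(z(b))\right)$, so the count of its multiples up to $x$ is of size $x\exp\left(-\vartheta(z(x))\right)$, not $x\exp\left(-z(x)\right)$. Chebyshev does not give $\vartheta(z)\sim z$ (that is the prime number theorem), only $\vartheta(z)\asymp z$; and even granting the PNT, $\vartheta(z)=z+E(z)$ with $E(z)$ unbounded --- indeed $E(z)\to+\infty$ along suitable sequences by Littlewood-type oscillation results --- so $\exp\left(-\vartheta(z(x))\right)$ loses an unbounded factor against $\exp\left(-z(x)\right)$. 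Consequently your construction, as defined, only yields
$$
\mathcal{B}(x)\gg x\exp\left(-(1+o(1))\tfrac{2}{m}(\log x)^{1/m}\log\log x\right),
$$
and the stated bound $\mathcal{B}(x)\gg x\exp\left(-\tfrac{2}{m}(\log x)^{1/m}\log\log x\right)$ is actually false for your $\mathcal{B}$ along the sequences where $\vartheta(z)-z$ is large and positive. The repair is a one-line rescaling that restores the cushion the paper builds in implicitly: take the level to be $\frac{1}{m}(\log b)^{1/m}\log\log b$, so that Chebyshev's $\vartheta(z)<2z$ gives $Q(b)\le\exp\left(\tfrac{2}{m}(\log b)^{1/m}\log\log b\right)$; equivalently, as in the paper, demand divisibility by the first $j\approx(\log b)^{1/m}$ primes, for which $\log d_j\le 2j\log j\le\tfrac{2}{m}(\log b)^{1/m}\log\log b$. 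With that change your density bound, your $c$-condition argument (which anyway only needs $\mathcal{B}(cx)\gg\mathcal{B}(x)$; the asymptotic $\mathcal{B}(cx)\sim c\,\mathcal{B}(x)$ you assert can fail depending on where the jumps of $Q$ fall relative to $x$), and your upper bound all go through, and the argument becomes a legitimate variant of the paper's proof.
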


It is clear that
$$
x\exp\left(-\frac{2}{m}(\log x)^{1/m}\log\log x\right)> x^{1-\varepsilon}
$$
for any given $\varepsilon>0$, provided that $x$ is sufficiently large. Thus,
Theorem \ref{th2} surpasses our requirement significantly.

\section{Proof of Theorem \ref{th1}}
The standard method in the proof of the Romanoff type theorems lies in the investigations on moments of the representation function defined as
$$
f(n)=\#\left\{(p,b):n=p+b,p\in \mathcal{P},b\in \mathcal{B}\right\}.
$$
But, as it was already shown by Yang and Chen \cite{Ch}, this could only lead to the bound
$$
\mathcal{S}(x)\gg x/\log\log x.
$$
Our new idea is, instead of investigating $f(n)$, studying a certain truncated representation function
\begin{equation}\label{eq18-1}
f_\alpha(n)=\#\left\{(p,b):n=p+b,p\in \mathcal{P},b\in \mathcal{B},b<(\log n)^{1/\alpha}\right\}.
\end{equation}
We now proceed the details below.
\begin{proof}[Proof of Theorem \ref{th1}]
Since $\mathcal{B}$ is a subset of $\mathbb{N}$ with $c$-condition, there is a constant $c<1$ such that $\mathcal{B}(cx)\gg \mathcal{B}(x)$. For any given $\alpha>0$, there exists some $0<\delta<1$ (depending on $c$ and $\alpha$) so that
$$
\delta^{1/\alpha}>c.
$$

Let $f_\alpha(n)$ be defined as the one in Eq. (\ref{eq18-1}). Noting that
\begin{align*}
\mathcal{S}(x)&=\#\left\{n\le x: n=p+b,p\in \mathcal{P},b\in \mathcal{B}\right\}\\
&\ge \#\left\{n\le x: n=p+b,p\in \mathcal{P},b\in \mathcal{B},b<(\log x)^{1/\alpha}\right\}\\
&:=\mathcal{S}_\alpha(x), \quad \text{say},
\end{align*}
it suffices to prove $\mathcal{S}_\alpha(x)\gg x/\log\log\log x$. Employing the Cauchy--Schwarz inequality, we would obtain that
\begin{align}\label{eq18-2}
\Bigg(\sum_{n\le x}f_\alpha(n)\Bigg)^2\le \Bigg(\sum_{n\le x}f_\alpha^2(n)\Bigg)\mathcal{S}_\alpha(x)
\end{align}
since any number $n\le x$ with $f_\alpha(n)\ge 1$ will clearly be counted exactly once by $\mathcal{S}_\alpha(x)$. It can be seen for sufficiently large $x$ that
\begin{align*}
\sum_{n\le x}f_\alpha(n)\ge \sum_{x^\delta<n\le x}f_\alpha(n)\ge\sum_{\substack{x^\delta<p+b\le x\\ b<\left(\log x^\delta\right)^{1/\alpha}}}1\ge \sum_{x^{\delta}<p<x/2}1\sum_{b<(\log x^{\delta})^{1/\alpha}}1.
\end{align*}
Thus, we deduce from above and the prime number theorem that
\begin{align*}
\sum_{n\le x}f_\alpha(n)\gg_{c,\alpha} \frac{x}{\log x}\mathcal{B}\left((\log x^\delta)^{1/\alpha}\right)=\frac{x}{\log x}\mathcal{B}\left(\delta^{1/\alpha}(\log x)^{1/\alpha}\right),
\end{align*}
provided that $x$ is sufficiently large (in terms of $c$ and $\alpha$).
Recall that $\delta^{1/\alpha}>c$, we know
$$
\mathcal{B}\left(\delta^{1/\alpha}(\log x)^{1/\alpha}\right)\ge \mathcal{B}\left(c(\log x)^{1/\alpha}\right)\gg_{c,\alpha} \mathcal{B}\left((\log x)^{1/\alpha}\right),
$$
where the last inequality comes from the $c$-condition of $\mathcal{B}$.
It then follows that
\begin{align}\label{eq18-3}
\sum_{n\le x}f_\alpha(n)\gg_{c,\alpha} \frac{x}{\log x}\mathcal{B}\left((\log x)^{1/\alpha}\right).
\end{align}
We are now in a position to study the second moment of $f_\alpha(n)$ on looking at Eq. (\ref{eq18-2}).

It is clear that
\begin{align*}
\sum_{n\le x}f_\alpha^2(n)\le \sum_{\substack{p_1+b_1=p_2+b_2\le x\\ b_1,b_2<(\log x)^{1/\alpha}}}1.
\end{align*}
By separating the cases that $b_1=b_2$ or not, one can observe easily that
\begin{align}\label{equation-18-1}
\sum_{n\le x}f_\alpha^2(n)\le \sum_{b_1<(\log x)^{1/\alpha}}\sum_{p_1\le x}1+2\sum_{b_1<b_2<(\log x)^{1/\alpha}}\sum_{\substack{p_2<p_1\le x\\ p_1-p_2=b_2-b_1}}1.
\end{align}
Using again the prime number theorem, we get
\begin{align}\label{equation-18-2}
\sum_{b_1<(\log x)^{1/\alpha}}\sum_{p_1\le x}1\ll \frac{x}{\log x}\mathcal{B}\left((\log x)^{1/\alpha}\right).
\end{align}
For even $h\neq0$, let $\pi_2(x,h)$ be the number of prime pairs $p$ and $q$ with $q-p=h$ not exceeding $x$. It is well known (see e.g. \cite[Theorem 7.3]{Na}) that
\begin{align}\label{c2}
\pi_2(x,h)\ll\frac{x}{\log^2x}\prod_{p|h}\left(1+\frac{1}{p}\right).
\end{align}
It is also well--known (see e.g. \cite{Halberstam}) that
\begin{align}\label{d2}
\prod_{p|h}\left(1+\frac{1}{p}\right)\ll\log\log h.
\end{align}
From Eqs. (\ref{c2}) and (\ref{d2}) we have
\begin{align}\label{equation-18-3}
\sum_{b_1<b_2<(\log x)^{1/\alpha}}\sum_{\substack{p_2<p_1\le x\\ p_1-p_2=b_2-b_1}}1&\ll \frac{x}{\log^2x}\sum_{b_1<b_2<(\log x)^{1/\alpha}}\prod_{p|b_2-b_1}\left(1+\frac{1}{p}\right)\nonumber\\
&\ll_\alpha \frac{x\log\log\log x}{\log^2x}\mathcal{B}\left((\log x)^{1/\alpha}\right)^2.
\end{align}
Taking Eqs. (\ref{equation-18-2}) and (\ref{equation-18-3}) into Eq. (\ref{equation-18-1}), we conclude that
\begin{align}\label{equation-18-4}
\sum_{n\le x}f_\alpha^2(n)&\ll \frac{x}{\log x}\mathcal{B}\left((\log x)^{1/\alpha}\right)+\frac{x\log\log\log x}{\log^2x}\mathcal{B}\left((\log x)^{1/\alpha}\right)^2\nonumber\\
&\ll \frac{x\log\log\log x}{\log^2x}\mathcal{B}\left((\log x)^{1/\alpha}\right)^2
\end{align}
since $\mathcal{B}\left((\log x)^{1/\alpha}\right)\gg \left((\log x)^{1/\alpha}\right)^\alpha=\log x$. Now, it follows immediately from Eqs. (\ref{eq18-2}), (\ref{eq18-3}) and (\ref{equation-18-4}) that
$$
\mathcal{S}_\alpha(x)\gg_{c,\alpha} x/\log\log\log x,
$$
which completes the proof of our theorem.
\end{proof}

\section{Proof of Theorem \ref{th2}}

\begin{proof}[Proof of Theorem \ref{th2}]
Let $p_i$ be the $i$-th prime and $d_t=p_1p_2\cdot\cdot\cdot p_t$ for any positive integer $t$.
For any positive integer $j$, set
$$\mathcal{B}_j=\left\{n:n\in\mathbb{N},d_{j}|n\right\}\cap \left[\exp\left(j^m\right),\exp\left((j+1)^m\right)\right)$$
and
$$\mathcal{B}=\bigcup_{j=1}^{\infty}\mathcal{B}_j.$$
It is not difficult to see that there is some positive integer $j_0$ such that
\begin{align}\label{new19-1}
|\mathcal{B}_{j_0}|<|\mathcal{B}_{j_0+1}|<|\mathcal{B}_{j_0+2}|<|\mathcal{B}_{j_0+3}|<\cdot\cdot\cdot
\end{align}
via the prime number theorem.
Let $x$ be a sufficiently large number. Suppose that
\begin{align}\label{eq0}
\exp\left(\ell^m\right)\le x<\exp\left((\ell+1)^m\right),
\end{align}
then
\begin{align}\label{eq1}
(\log x)^{1/m}-1<\ell\le (\log x)^{1/m}.
\end{align}
By Eq. (\ref{eq1}) together with the Chebyshev estimate, we have
\begin{align}\label{eq2}
d_{\ell}&=\exp\left(\sum_{p\le p_{\ell}}\log p\right)
<\exp\left(2\ell\log \ell\right)
\le\exp\left(\frac{2}{m}(\log x)^{1/m}\log\log x\right),
\end{align}
\begin{align}\label{new19-2}
d_{\ell-1}&=\exp\Bigg(\sum_{p\le p_{\ell-1}}\log p\Bigg)
>\exp\left(\frac{2}{3}\ell\log \ell\right)
\ge\exp\left(\frac{1}{2m}(\log x)^{1/m}\log\log x\right),
\end{align}
and
\begin{align}\label{eq3}
\exp\left((\ell-1)^m\right)<\exp\left(\ell^m-(m-1/2)\ell^{m-1}\right)
<x\exp\left(-(m-1)(\log x)^{\frac{m-1}{m}}\right).
\end{align}
By Eqs. (\ref{eq2}), (\ref{eq3}) and definition of $\mathcal{B}$, we have
\begin{align*}
\mathcal{B}(x)&
\ge\mathcal{B}_\ell(x)+\left|\mathcal{B}_{\ell-1}\right|\nonumber\\&\ge\frac{x-\exp\left(\ell^m\right)}{d_\ell}+\frac{\exp\left(\ell^m\right)-\exp\left((\ell-1)^m\right)}{d_{\ell-1}}-3\nonumber\\
&\ge \frac{x-\exp\left((\ell-1)^m\right)}{d_\ell}-3\nonumber\\
&\gg x\exp\left(-\frac{2}{m}(\log x)^{1/m}\log\log x\right).
\end{align*}
The subset $\mathcal{B}$ is the one satisfying $c$-condition, since the elements of $\mathcal{B}$ lying in the interval $[x/2,x)$ are divisible by more (or at least equal) primes than those in the interval $[1,x/2)$, which leads to the fact that there are more elements of $\mathcal{B}$ located in $[1,x/2)$ than in $[x/2,x)$. Therefore, we have $\mathcal{B}(x/2)\ge \mathcal{B}(x)/2$. It remains to prove $$\mathcal{S}(x)\ll x/\log\log\log x.$$ To this aim, let $s$ be a sufficiently large integer depending on $x$ and $m$ which shall be decided later.
The sums $p+b~(b\in \mathcal{B})$ up to $x$ will divided into the following three parts:

Part I are the sums $p+b$ with $p>p_\ell$ and $b\in \mathcal{B}_j~(s\le j\le \ell)$. The sums $p+b$ in this part cannot be divisible by primes $p_j~(1\leqslant j\leqslant s)$, thus the number of the sums $p+b$ within this part is not large than
\begin{align}\label{eq5}
\sum_{\substack{n\le x\\\left(n,\prod_{p\le p_s}p\right)=1}}1
&=\sum_{k|\prod_{p\le p_s}p}\mu(k)\left\lfloor \frac{x}{k}\right\rfloor\nonumber\\ &\le x\prod_{p\le p_s}\left(1-\frac{1}{p}\right)+O\left(2^s\right)\nonumber\\
&\ll x(\log p_s)^{-1}+2^{s}\nonumber\\
&\ll x(\log s)^{-1}+2^s,
\end{align}
where the last but one step follows from the Mertens estimate.

Part II are the sums $p+b$ with $p\le p_\ell$ and $b\in \mathcal{B}_j~(s\le j\le \ell).$ The number of these sums $p+b$ up to $x$ can be bounded by Eqs. (\ref{new19-1}), (\ref{eq1}), (\ref{new19-2}) and the Chebyshev estimate as
\begin{align}\label{eq6}
\left(|\mathcal{B}_s|+\cdot\cdot\cdot+|\mathcal{B}_{\ell-1}|+\mathcal{B}_\ell(x)\right)\ell&\le \left(\ell|\mathcal{B}_{\ell-1}|+\mathcal{B}_\ell(x)\right)\ell\nonumber\\
&\le \left((\log x)^{1/m}\frac{\exp\left(\ell^m\right)}{d_{\ell-1}}+\frac{x}{d_\ell}\right)(\log x)^{1/m}\nonumber\\
&\le 2x(\log x)^{2/m} /d_{\ell-1}\nonumber\\
&\ll x(\log x)^{2/m}\exp\left(\!-\frac{1}{2m}(\log x)^{1/m}\log\log x\! \right).
\end{align}

Part III are the remaining sums $p+b$ with $b\in \mathcal{B}_j~(1\le j\le s)$. These remaining sums are trivially bounded by
\begin{align}\label{eq7}
s|\mathcal{B}_s|\pi(x)\ll s\frac{\exp\left((s+1)^m\right)}{d_s}\frac{x}{\log x}\ll\frac{x}{\log x}\exp\left((s+1)^m\right).
\end{align}

Gathering together Eqs. (\ref{eq5}), (\ref{eq6}) and (\ref{eq7}), we would have
\begin{align}\label{eq8}
\mathcal{S}(x)\ll &x(\log x)^{2/m}\exp\left(-\frac{1}{2m}(\log x)^{1/m}\log\log x \right)\nonumber\\&\quad\quad\quad\quad\quad\quad+2^s+x(\log s)^{-1}+\frac{x}{\log x}\exp\left((s+1)^m\right).
\end{align}
Now we choose $s$ to be the integer $\left\lfloor (\log\log x)^{\frac{1}{2m}}\right\rfloor-1$, then Eq. (\ref{eq8}) yields
\begin{align}
\mathcal{S}(x)\ll x(\log\log\log x)^{-1}.
\end{align}
This completes the proof of Theorem \ref{th2}.
\end{proof}

\section*{Acknowledgments}
The author is supported by National Natural Science Foundation of China  (Grant No. 12201544), Natural Science Foundation of Jiangsu Province, China (Grant No. BK20210784), China Postdoctoral Science Foundation (Grant No. 2022M710121). He is also supported by  (Grant No. JSSCBS20211023) and (Grant No. YZLYJF2020PHD051).

\end{document}